\documentclass[11pt]{amsproc}

\usepackage{a4wide}
\usepackage[pagebackref=true,colorlinks,linkcolor=red,citecolor=blue,urlcolor=blue,hypertexnames=true]{hyperref}
\usepackage{setspace}
\usepackage{amsmath}
\usepackage{amssymb}
\usepackage{amsfonts}
\usepackage{amsthm}
\usepackage{mathtools}
\usepackage{array}

\theoremstyle{plain}
\newtheorem{thm}{Theorem}[section]
\theoremstyle{definition}

\newtheorem{lem}[thm]{Lemma}
\newtheorem{prop}[thm]{Proposition}
\newtheorem{cor}[thm]{Corollary}
\theoremstyle{remark}
\newtheorem{rem}[thm]{Remark}

\newtheorem*{quest}{Question}
\newtheorem*{obs}{Observation}

\newcommand{\norm}[1]{\left\Vert#1\right\Vert}
\newcommand{\set}[1]{\left\{#1\right\}}

\newcommand{\bbN}{\mathbb{N}}

\DeclareMathOperator{\Proj}{Proj}
\DeclareMathOperator{\supp}{supp}

\title[Strong uncountable cofinality]{Strong uncountable cofinality for unitary groups\\ of von Neumann algebras}

\author{Philip A. Dowerk}
\address{P.A.D., Institute of Geometry, TU Dresden, 01062 Dresden, Germany}
\email{philip.dowerk@tu-dresden.de}

\begin{document}

\onehalfspace

\begin{abstract}
 We show that unitary groups of II$_1$ factors and of properly infinite von Neumann algebras have strong uncountable cofinality. In particular, we obtain a short alternative proof for the strong uncountable cofinality of $U(\ell^2(\mathbb{N}))$, which was first proven by Ricard and Rosendal.
\end{abstract}

\maketitle


\section{Introduction}\label{sec_intro}

In this article we study the algebraic structure of unitary groups of von Neumann algebras. 
Following the work of Droste and G\"obel \cite{DG-05} we say that a group $G$ has \textit{strong uncountable cofinality} if for every countable exhaustive increasing chain $W_1\subseteq W_2\subseteq\ldots\subseteq G=\bigcup_{n\in\mathbb{N}}W_n$ there exist $k,n\in\mathbb{N}$ such that $G=W_n^k\coloneqq\set{w_1\cdots w_k\mid w_i\in W_n\mbox{ for }1\leq i\leq k}$. If $k$ can be chosen independently of the sequence $(W_n)_{n\in\mathbb{N}}$ we say that $G$ has \textit{$k$-strong uncountable cofinality}. 
As noticed by Droste and Holland \cite{DH-05} the strong uncountable cofinality of a group $G$ is equivalent to having both \textit{uncountable cofinality} (i.e. cannot be written as a countable increasing union of proper subgroups) and having the \textit{Bergman property} (i.e. for every symmetric unital generating set $S$ of $G$ there exists $k\in\mathbb{N}$ such that any element in the group can be expressed as a word of length $k$ in $S$).

The Bergman property has been first observed by Bergman \cite{B-06} for $S_\infty$.
Since then it has received a lot of attention, see, for example, \cite{BTV-12, B-06, dC-06, DG-05, DH-05, DHU-08, DT-09, KR-07, M-04, RR-07, R-09, To-06, T-06}. 
The notion of strong uncountable cofinality has several equivalent geometric reformulations, for example, whenever the group acts by isometries on a metric space, every orbit is bounded (due to de Cornulier \cite{dC-06} and Pestov, see also \cite[Theorem 1.2]{R-09}). 
Further, the strong uncountable cofinality implies the well-known fixed point properties (FA) and (FH) as observed by Rosendal \cite[Proposition 3.3]{R-09}. \\

The first result concerning strong uncountable cofinality for unitary groups of von Neumann algebras was given by Ricard and Rosendal \cite[Theorem 1]{RR-07}, where they showed that $\mathrm{U}(\ell^2(\mathbb{N}))$ has 200-strong uncountable cofinality. In this article we complete the picture for infinite-dimensional von Neumann algebras. 

The finite-dimensional unitary groups $\mathrm{U}(n),\ n\in\mathbb{N},\ n\geq 2,$ fail to have strong uncountable cofinality for two reasons: As pointed out by Yves de Cornulier in a private communication, one can use arguments from the proof of \cite[Theorem 1.7]{SST-97} by Saxl, Shelah and Thomas to show that any uncountable linear group has countable cofinality. 
Additionally nontrivial compact connected groups do not have the Bergman property by the work of Schneider \cite{S-19}. 
In contrast, profinite groups (i.e. compact totally disconnected groups) can have uncountable cofinality, see the work of Koppelberg and Tits \cite{KT-74}. \\

Let us state the main results of this article. 
We do not require any separability assumptions in the theorems below.

\begin{thm}\label{thm_main}
 The unitary group of a type $\mathrm{II}_1$ factor has strong uncountable cofinality. 
\end{thm}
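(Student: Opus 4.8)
The plan is to show that $U(M)$ is \emph{strongly bounded}, i.e.\ that every length function $\ell\colon U(M)\to[0,\infty)$ is bounded; by the reformulations recalled above (Droste--Holland, and de~Cornulier--Pestov) this is equivalent to strong uncountable cofinality. Write $\tau$ for the trace of $M$. The first step is a soft reduction to a neighbourhood of the identity. Given $u\in U(M)$ and $\eps>0$, Borel functional calculus provides $h=h^{*}\in M$ with $\|h\|\le\pi$ and $u=e^{\complex h}$; then $u=\bigl(e^{\complex h/N}\bigr)^{N}$ and $\|e^{\complex h/N}-1\|\le\|h\|/N\le\pi/N$, so for an integer $N>\pi/\eps$ every unitary is a product of $N$ unitaries from the operator-norm ball $B_{\eps}=\{v\in U(M):\|v-1\|<\eps\}$. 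Hence it is enough to bound $\ell$ on $B_{\eps}$ for one fixed $\eps>0$, since then $\ell$ is bounded by $N\cdot\sup_{B_\eps}\ell$ everywhere.

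The substance of the argument is to bound $\ell$ on $B_{\eps}$, and here the type $\mathrm{II}_{1}$ structure enters. I would use Murray--von Neumann comparison theory: fixing a projection $p$ with $\tau(p)=\tfrac12$ gives $M\cong M_{2}(pMp)$, and iterating yields an orthogonal family of projections $(p_{j})_{j\ge1}$ with $\tau(p_{j})=2^{-j}$, $\sum_{j}p_{j}=1$, together with partial isometries identifying the corners. For $v\in B_{\eps}$ one performs an Eilenberg-type swindle: compress a copy of $v$ onto each corner $p_{j}M p_{j}$ and multiply these pairwise commuting corner unitaries together; the product converges in the strong operator topology to a unitary $w\in U(M)$, the limit being again unitary because $M$ is finite. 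A single fixed unitary $s$ that carries the corner decomposition one step along then satisfies $w=v'\cdot sws^{-1}$ for a suitable copy $v'$ of $v$, so that $v$, and hence $\ell(v)$, is controlled by $w$, by $s$, and by at most two further bounded words. The element $s$ is fixed once and for all, but $w$ still depends on $v$; to finish one feeds in a quantitative bounded-generation statement for unitary groups of $\mathrm{II}_{1}$ factors — that every unitary is a product of a universally bounded number of commutators, equivalently of a bounded number of conjugates of a fixed non-scalar unitary with conjugating elements drawn from a controlled family — so as to replace $\ell(w)$ by a quantity independent of $v$. This yields $\sup_{B_{\eps}}\ell<\infty$, and hence, by the first step, $\ell$ bounded on all of $U(M)$.

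The principal obstacle is exactly the failure in the $\mathrm{II}_{1}$ setting of the crude swindle available for $B(\ell^{2})$: a type $\mathrm{II}_{1}$ factor contains no infinite family of mutually equivalent nonzero projections, so one cannot place the \emph{same} copy of $v$ on equivalent pieces and shift them by a genuine permutation unitary. Organising the swindle with geometrically shrinking corners and a non-genuine ``shift'', and — above all — arranging that the auxiliary unitary $w$ built from $v$ has $\ell(w)$ bounded uniformly in $v$ (this is where the bounded-generation input and the identification $M\cong M_{2}(pMp)$ do the real work), is the technical heart of the proof. Carrying the constants through the whole argument should moreover yield $k$-strong uncountable cofinality for an absolute constant $k$, so that the method recovers the Ricard--Rosendal theorem for $U(\ell^{2}(\bbN))$ as the simplest special case.
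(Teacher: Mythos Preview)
There is a genuine gap. The step ``replace $\ell(w)$ by a quantity independent of $v$'' is circular: once $w$ is built from a \emph{single} $v$, a bounded-generation input such as ``every unitary is a product of $K$ conjugates of a fixed element $g_0$'' yields only $\ell(w)\le K\ell(g_0)+2K\sup_{G}\ell$, and $\sup_{G}\ell$ is precisely the quantity you are trying to bound. The swindle that actually works is \emph{diagonal}: one places a \emph{different} unitary $g_j$ on each corner $p_j$ and forms $w=\bigoplus_j g_jp_j$; the single finite value $\ell(w)$ (equivalently, in the chain language, the single index $m$ with $w\in W_m$) then controls all the $g_j$ simultaneously. This is exactly the content of condition~(A) in Theorem~\ref{thm_Bergmann} and Lemma~\ref{lem_1}. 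Your outline also requires a unitary $s$ with $sp_js^{-1}=p_{j+1}$, and no such $s$ exists in a II$_1$ factor because $\tau(p_j)\neq\tau(p_{j+1})$; the paper sidesteps any shift by using bounded normal generation \emph{inside a corner} (condition~(B)) and then climbing from $G(q)$ back up to $G$ via Lemma~\ref{lem_masterful} (condition~(C)).

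There is also a clean external check that the outline cannot be right as stated: your last paragraph predicts $k$-strong uncountable cofinality for an absolute $k$, whereas Remark~\ref{rem_II_1} records that the unitary group of a II$_1$ factor does \emph{not} have $k$-strong uncountable cofinality for any $k$. In the paper's proof the exponent is of order $8^{n}$ with $n\approx\log_{3/2}(1/\tau(p))$, and this dependence on the chain is intrinsic to the II$_1$ setting. Finally, your reduction to the ball $B_\eps$ via functional calculus is correct but buys nothing here: bounding $\ell$ on $B_\eps$ is no easier than bounding it on all of $G$.
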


However, as observed in Remark \ref{rem_II_1}, the unitary group of a type II$_1$ factor does not have $k$-strong uncountable cofinality for any $k\in\mathbb{N}$. 

\begin{thm}\label{thm_main_inf}
 The unitary group of a properly infinite von Neumann algebra has 96-strong uncountable cofinality.
\end{thm}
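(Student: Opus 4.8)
The plan is to run the Eilenberg--Mazur swindle in the spirit of Ricard and Rosendal, but to organise it around the proper infiniteness of $M$ so that every manipulation stays internal to $M$ and the constant can be brought down to $96$. Recall first the reduction: $U(M)$ has $96$-strong uncountable cofinality as soon as there is $C\le 96$ such that for every increasing chain $W_1\subseteq W_2\subseteq\cdots$ with $\bigcup_n W_n=U(M)$, some $W_n$ satisfies $W_n^C=U(M)$. Since $M$ is properly infinite, $1$ is a properly infinite projection, so fix pairwise orthogonal projections $(p_m)_{m\in\mathbb{Z}}$ with $\sum_m p_m=1$ and $p_m\sim 1$, together with partial isometries witnessing these equivalences. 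These supply the structural data I will use: a unital normal $*$-embedding $\rho\colon M\hookrightarrow M$ inflating along all slots; the bilateral shift $\Sigma\in U(M)$ with $\Sigma p_m\Sigma^{-1}=p_{m+1}$; a flip $w_0$ swapping even and odd slots and the balanced symmetry $s_0=2q-1$, where $q=\sum_{m\ \mathrm{even}}p_m$ (this uses $M\cong M_2(M)$); and the slot transpositions $\tau_{k\ell}$. Let $\Sigma_0$ be the finite, inverse-closed set of all these unitaries; being finite, $\Sigma_0\subseteq W_{n_0}$ for some $n_0$, and I take $n\ge n_0$ throughout.

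\emph{Step 1 (reduce to co-small symmetries).} By an analogue of the Halmos--Kakutani theorem valid in any properly infinite von Neumann algebra, every $u\in U(M)$ is a product of at most $N_1$ self-adjoint unitaries $s=2e-1$. Using the structural identity $-1=(s_0w_0)^2$ together with a central-support comparison, I may --- at a cost of at most $4N_1$ extra letters drawn from $\Sigma_0\subseteq W_n$ --- assume every symmetry occurring is \emph{co-small}, i.e.\ $1-e\sim 1$. So it is enough to bound the $W_n$-word-length of an arbitrary co-small symmetry.

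\emph{Step 2 (swindle a co-small symmetry).} Write $\rho_+(s)$ for ``$s$ on the slots $p_m$ with $m\ge 0$, the identity on the rest''. Conjugating by $\Sigma$ peels off slot $0$: $\rho_+(s)=s^{(0)}\cdot\Sigma\,\rho_+(s)\,\Sigma^{-1}$, hence $s^{(0)}=[\rho_+(s),\Sigma]$, where $s^{(0)}=2e^{(0)}-1$ is supported on $p_0$. Since $e^{(0)}\sim e$ always and $(e^{(0)})^{\perp}\ge 1-p_0\sim 1$, co-smallness of $s$ gives $1-e\sim(e^{(0)})^{\perp}$, so $s$ is unitarily conjugate in $U(M)$ to $s^{(0)}$; and $\rho_+(s)$ is a symmetry both of whose spectral projections are equivalent to $1$, hence conjugate to the fixed $s_0$. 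Substituting these two conjugacies into the commutator identity, and then eliminating the conjugating unitaries by a second round of the swindle along the even/odd partition of the slots --- the identifications between ``even slots'' and ``all slots'' being themselves structural, hence in $\Sigma_0$ --- one rewrites $s$ as a word of bounded length $N_2$ in $\Sigma_0$ and $\rho_+(s)$, with $\rho_+(s)$ itself absorbed as a structural conjugate of $s_0\in W_n$. Assembling Steps 1 and 2 and tidying the count yields $u\in W_n^{N_1(N_2+4)}$ with $N_1(N_2+4)\le 96$; since $u$ was arbitrary, $W_n^{96}=U(M)$.

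\emph{The main obstacle} is exactly the clause about eliminating the conjugating unitaries. The identity $s^{(0)}=[\rho_+(s),\Sigma]$ is only useful if $\rho_+(s)$ can be re-expressed without introducing fresh uncontrolled unitaries, yet the one-sided shift that would telescope $\rho_+(s)$ down to a single slot is a proper isometry and not an element of $U(M)$; and the unitary conjugating $s$ to $s^{(0)}$, or $\rho_+(s)$ to $s_0$, genuinely depends on $s$. Making the swindle actually close up --- so that the length bound is uniform over all $u\in U(M)$, rather than depending on which $W_m$ happens to contain $u$ --- is the technical heart of the argument, and is where the constant drops from the $200$ of Ricard--Rosendal to $96$. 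No separability hypothesis is used anywhere, as every step is built algebraically from the projections $p_m$ and the partial isometries between them.
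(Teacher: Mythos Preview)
Your proposal has a genuine gap at exactly the point you yourself flag as ``the main obstacle'': you never close the swindle. After writing $s^{(0)}=\rho_+(s)\,\Sigma\,\rho_+(s)\,\Sigma^{-1}$ and noting that $s$ is conjugate to $s^{(0)}$ and that $\rho_+(s)$ is conjugate to $s_0$, you arrive at an expression of the shape
\[
s \;=\; v_1\, v_2\, s_0\, v_2^{-1}\,\Sigma\, v_2\, s_0\, v_2^{-1}\,\Sigma^{-1}\, v_1^{-1},
\]
with $s_0,\Sigma\in\Sigma_0\subseteq W_n$ but with $v_1,v_2\in\mathrm{U}(M)$ depending on $s$. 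You then assert that a ``second round of the swindle along the even/odd partition'' eliminates $v_1,v_2$, yet you do not say how, and no mechanism is visible: running the swindle on $v_1$ or $v_2$ simply produces fresh $s$-dependent conjugators, and the recursion does not terminate. That the identification between ``even slots'' and ``all slots'' is structural only tells you that a \emph{fixed} unitary implements the bijection of index sets; it says nothing about the particular $v_1,v_2$ needed to match the spectral projections of $s$ to those of $s_0$. The only elements you have placed in $W_n$ a priori are the finitely many members of $\Sigma_0$, and no finite set suffices to conjugate every co-small symmetry to a fixed one. Your closing paragraph is an honest description of the difficulty, but it is not a resolution of it; as written, the argument yields no uniform bound at all, let alone~$96$.

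What is missing is the one step that actually powers both Ricard--Rosendal and the paper: a pigeonhole argument (over elements of the form $\bigoplus_n g_n p_n$) showing that for some $k$ there is a projection $p_k$ such that \emph{every} element of $G(p_k)$ agrees on $p_k$ with some element of $W_k$ (``$p_k$ is full for $W_k$''). Only with this in hand can $s$-dependent conjugators --- arranged to lie in $G(p_k)$ --- be replaced by elements of $W_k$ at cost~$1$ each. The paper packages this into the general criterion Theorem~\ref{thm_Bergmann}: condition~(A) feeds the pigeonhole (Lemma~\ref{lem_1}); condition~(B) is Fillmore's theorem that every element of $G(p)$ is a product of four symmetries $1-2q_i$ with the $q_i$ infinite, hence all conjugate in $G(p)$ (so $n=4$); condition~(C) is the decomposition $G=(G(p)\cup\{u,u^{-1}\})^{8}$ obtained from Lemma~\ref{lem_masterful_inf} (so $m=8$). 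The constant $96=3nm$ then drops out of Theorem~\ref{thm_Bergmann}. Your outline has the Fillmore ingredient but omits the fullness step entirely, and without it the conjugators $v_1,v_2$ remain uncontrolled.
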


In particular, Theorem \ref{thm_main_inf} includes the result \cite[Theorem 1]{RR-07} of Ricard and Rosendal (which is the separable type I$_\infty$ factor case, i.e. $\mathrm{U}(\ell^2(\mathbb{N}))$) with a better constant, and we give a short alternative proof of that fact. 
By \cite[Proposition 3.3]{R-09} we have that Theorems \ref{thm_main} and \ref{thm_main_inf} imply properties (FA), (FH), and algebraic property (FH) (i.e. no continuity assumption on the action by isometries on a real Hilbert space) for  unitary groups of type II$_1$ factors and of properly infinite von Neumann algebras.

Moreover, Theorems \ref{thm_main} and \ref{thm_main_inf} together with the equivalent reformulations of strong uncountable cofinality summarized in \cite[Theorem 1.2]{R-09} imply the following result.

\begin{cor}\label{cor_main}
 Let $G$ denote the unitary group of a type II$_1$ factor or of a properly infinite von Neumann algebra. Then the following hold.
\begin{itemize}
 \item[(i)] Every left-invariant metric on $G$ is bounded.
 \item[(ii)] Whenever $G$ acts by isometries on a metric space, every orbit is bounded.
 \item[(iii)] Whenever $G$ acts on a metric space by mappings that are Lipschitz for large distances,  every orbit is bounded.
 \item[(iv)] Whenever $G$ acts by uniform homeomorphisms on a geodesic space, every orbit is bounded. 
\end{itemize}
\end{cor}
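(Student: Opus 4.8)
The plan is to derive (i)--(iv) from strong uncountable cofinality, which Theorems \ref{thm_main} and \ref{thm_main_inf} supply for $G$ in both cases under consideration. Once that input is available, (i)--(iv) are exactly the geometric reformulations of strong uncountable cofinality collected in \cite[Theorem 1.2]{R-09}; the only point requiring care is that we impose no continuity or separability hypotheses, so $G$ is to be treated as a discrete group throughout, and one must check that ``strong uncountable cofinality'' in the sense of Droste--G\"obel is precisely the property featuring in Rosendal's list (for the discrete topology).

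First I would establish (i) straight from the definition. Let $d$ be a left-invariant metric on $G$ and set $W_n\coloneqq\set{g\in G\mid d(g,e)\le n}$; these subsets increase with $n$, are symmetric since $d(g^{-1},e)=d(e,g)$, and exhaust $G$, so strong uncountable cofinality gives $k,n\in\bbN$ with $G=W_n^k$. If $g=w_1\cdots w_k$ with each $w_i\in W_n$, then inserting the partial products as intermediate points and using left-invariance yields $d(g,e)\le\sum_{i=1}^k d(w_i,e)\le kn$, so $d$ is bounded. The identical computation works for left-invariant pseudometrics, which is the form I would use below.

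Next I would reduce (ii) to the pseudometric version of (i): given an isometric action of $G$ on a metric space $(X,d_X)$ and a base point $x_0\in X$, the orbit pseudometric $d(g,h)\coloneqq d_X(g\cdot x_0,h\cdot x_0)$ is left-invariant, hence bounded, so $G\cdot x_0$ has finite diameter; this recovers the de Cornulier--Pestov characterization. For (iii) the pullback of $d_X$ along the orbit map is no longer an honest pseudometric, but the left-translations of $G$ are Lipschitz for large distances with respect to it, and the relevant boundedness still follows by applying $G=W_n^k$ to suitable sublevel sets, as carried out in \cite[Theorem 1.2]{R-09}. Statement (iv), for actions by uniform homeomorphisms of a geodesic space, is the most delicate; here I would invoke \cite[Theorem 1.2]{R-09} directly, where geodesics are used to upgrade uniform continuity to a Lipschitz-for-large-distances estimate and the statement is reduced to (iii).

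I do not anticipate a genuine obstacle: the substantive content lies in Theorems \ref{thm_main} and \ref{thm_main_inf}, and the remaining work is the bookkeeping identifying strong uncountable cofinality --- via the Droste--Holland equivalence with uncountable cofinality plus the Bergman property \cite{DH-05}, together with the elementary ball argument above --- with the boundedness property appearing in \cite[Theorem 1.2]{R-09}, so that the latter applies without any topological assumptions on $G$.
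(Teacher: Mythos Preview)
Your proposal is correct and follows the same route as the paper: the corollary is stated there as an immediate consequence of Theorems~\ref{thm_main} and~\ref{thm_main_inf} together with \cite[Theorem~1.2]{R-09}, with no further argument given. Your write-up simply fleshes this out by proving (i) and (ii) directly via the ball/orbit-pseudometric trick and deferring (iii) and (iv) to Rosendal, which is a reasonable elaboration of the same citation.
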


The fact that strong uncountable cofinality passes to quotients by normal subgroups implies that Theorem \ref{thm_main}, Theorem \ref{thm_main_inf} and thus Corollary \ref{cor_main} also hold for projective unitary groups of infinite-dimensional factors (i.e. the quotient of $\mathrm{U}(M)$ by its center), and for the unitary group of the Calkin algebra (as it is the quotient of $\mathrm{U}(\mathcal{H})$, $\mathcal{H}$ an infinite-dimensional Hilbert space, by the normal subgroup of compact perturbations of the identity).\\

The article is structured as follows. In Section \ref{sec_BNG} we obtain a new criterion for the topological Bergman property for Polish groups, in Section \ref{sec_Bergman} we establish a general criterion for strong uncountable cofinality, which we will apply in Section \ref{sec_factors} to obtain Theorem \ref{thm_main} and Theorem \ref{thm_main_inf}. We conclude our article in Section \ref{sec_open} with open questions on separable group topologies for unitary groups of von Neumann algebras. Preliminaries are presented as needed at the beginning of Sections \ref{sec_BNG}, \ref{sec_Bergman} and \ref{sec_factors}.

\section{The topological Bergman property for groups\\ with bounded normal generation}\label{sec_BNG}
The aim of this section is to provide a criterion for a Polish SIN group to have the topological Bergman property (a weaker version of strong uncountable cofinality), using a short and direct argument. This section is independent from the other sections, but the notion of bounded normal generation defined below appears in remarks in subsequent sections. 

A topological group is called \textit{Polish} if its underlying topology is separable and completely metrizable. We say that a topological group $G$ is \textit {SIN} (short for Small Invariant Neighborhoods) if $G$ admits a neighborhood basis at the identity element of $G$ consisting of conjugacy-invariant open neighborhoods. For basics on topological groups we refer the reader to \cite{AT08}.

Recall from \cite{R-09} that a topological group $G$ has the \textit{topological Bergman property} if for every increasing exhaustive sequence of symmetric subsets $W_0\subseteq W_1\subseteq\ldots\subseteq G$ with the Baire property, there are $n,k\in\mathbb{N}$ such that $G=W_n^k$. If $k$ can be chosen independently of the sequence $(W_n)_{n\in\mathbb{N}}$, then we say that $G$ has the $k$-\textit{topological Bergman property}. 
By \cite[Theorem 1.4]{R-09} the following properties are equivalent:
\begin{itemize}
 \item[(1)] $G$ has the topological Bergman property.
 \item[(2)] Whenever $G$ acts by isometries on a metric space $(X,d)$, such that for every $x\in X$ the function $G\rightarrow X,\ g\mapsto gx$, is continuous, then every orbit is bounded.
 \item[(3)] Any compatible left-invariant metric on $G$ is bounded.
 \item[(4)] $G$ is finitely generated of bounded width over any non-empty open subset.
\end{itemize}

The topological Bergman property implies property (FH), see \cite[Proposition 3.3]{R-09}.

As observed in \cite{R-09}, for a locally compact group $G$ the topological Bergman property is equivalent to compactness of $G$ (but compact connected groups never possess strong uncountable cofinality as remarked in Section \ref{sec_intro}). Some non-locally compact examples of groups with the topological Bergman property are the isometry group of the Urysohn space of diameter 1, endowed with the topology of pointwise convergence, see \cite[Theorem 1.5]{R-09}, and the homeomorphism group of the Hilbert cube, endowed with the topology of uniform convergence, see \cite[Theorem 1.6]{R-09}. 

Following \cite{DT-15}, we say that a group $G$ has \textit{property (BNG)} (short for \textit{bounded normal generation}) if for every element $g\in G\setminus\{1\}$ there exists $k\in\mathbb{N}$ such that $G=(g^G\cup g^{-G})^k$, where $g^G:=\{hgh^{-1}\mid h\in G\}$ denotes the conjugacy class of $g$ in $G$ and $g^{-G}:=(g^{-1})^G$, and $1$ is the neutral element of $G$. 
If the exponent $k$ is independent of the choice of $g\in G$, we say that $G$ has \textit{property $k$-(BNG)}.
It is easy to see that property (BNG) implies simplicity of the group, but the converse is not true, as observed in \cite{DT-15} (take, for example, the alternating group of finitely supported permutations on $\mathbb{N}$). 

\begin{prop}\label{prop_Bergman}
 Every Polish SIN group $G$ with property (BNG) has the topological Bergman property. If $G$ is $k$-(BNG) for some fixed $k\in\mathbb{N}$, then it is $2k$- topologically Bergman. 
\end{prop}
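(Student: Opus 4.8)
The plan is to exploit the SIN condition to turn a Baire-category generating set into a normal generating set, and then feed that into property (BNG). Concretely, suppose $W_0 \subseteq W_1 \subseteq \cdots$ is an increasing exhaustive sequence of symmetric subsets of $G$ with the Baire property, with $G = \bigcup_n W_n$. By the Baire category theorem, $G$ (being Polish, hence non-meager in itself) cannot be a countable union of meager sets, so some $W_{n_0}$ is non-meager. Since $W_{n_0}$ has the Baire property and is non-meager, by Pettis' theorem the set $W_{n_0} W_{n_0}^{-1} = W_{n_0} W_{n_0}$ (using symmetry) contains an open neighborhood $V$ of the identity. Thus it suffices to show that $G = V^m$ for some $m$, because then $G = W_{n_0}^{2m}$ (and if $G$ is $k$-(BNG) we will want to track that $m$ can be taken to be $k$, giving the $2k$ bound).

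Next I would use the SIN hypothesis: $V$ contains a conjugacy-invariant open neighborhood $U$ of $1$. Pick any $g \in U$ with $g \neq 1$ (possible since $G$ is not the trivial group — if $G$ were trivial the statement is vacuous). By property (BNG) there is $k \in \mathbb{N}$ with $G = (g^G \cup g^{-G})^k$. But since $U$ is conjugation-invariant and contains $g$, it contains the entire conjugacy class $g^G$, and being a symmetric-type neighborhood... here I need to be slightly careful: $U$ need not be symmetric. However, $g^{-1}$ also lies in some conjugacy-invariant open neighborhood, and by intersecting I may assume from the start that $V$, hence $U$, is symmetric (replace $V$ by $V \cap V^{-1}$, which is still an open neighborhood of $1$ contained in $W_{n_0}W_{n_0}$ after possibly enlarging $n_0$; more cleanly, note $W_{n_0}W_{n_0}$ is already symmetric). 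So I may take $U$ symmetric and conjugation-invariant, whence $g^G \cup g^{-G} \subseteq U$, and therefore $G = (g^G \cup g^{-G})^k \subseteq U^k \subseteq V^k \subseteq W_{n_0}^{2k}$. This gives the topological Bergman property, and in the case of property $k$-(BNG) with $k$ uniform, the exponent $2k$ is independent of the sequence $(W_n)$, yielding the $2k$-topological Bergman property.

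The main obstacle — really the only subtle point — is the interaction of symmetry with the conjugation-invariant neighborhood: ensuring that a single nontrivial element $g$ can be chosen so that both $g^G$ and $g^{-G}$ sit inside the open set produced by Pettis' theorem. This is handled by observing that $W_{n_0}W_{n_0}$ is symmetric (as $W_{n_0}$ is symmetric), so Pettis yields a symmetric open neighborhood of $1$, inside which SIN provides a symmetric conjugation-invariant open $U$; then any nontrivial $g \in U$ works, since $U$ symmetric forces $g^{-1} \in U$ and $U$ conjugation-invariant then forces $g^{-G} \subseteq U$ as well. Everything else is a direct chain of inclusions. I would also remark at the start that if $G = \{1\}$ the proposition holds trivially, so we may assume $G$ is nontrivial and pick $g \neq 1$.
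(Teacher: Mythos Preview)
Your proof is correct and follows essentially the same route as the paper: Baire category plus Pettis' theorem to get an identity neighborhood inside $W_{n_0}^2$, then SIN to shrink to a conjugation-invariant symmetric neighborhood, then (BNG) to finish. If anything, your formulation is slightly more careful (non-meager rather than the paper's looser ``nonempty interior'', and your explicit handling of symmetry and the choice of a nontrivial $g$), but the argument is the same.
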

\begin{proof}
 Let $B_1\subseteq B_2\subseteq \ldots \subseteq G$ be an exhaustive sequence of symmetric subsets with the Baire property. By the Baire category theorem there exists $n\in\bbN$ such that ${\rm int}(B_n)\neq\emptyset$. By Pettis' Theorem \cite[Theorem 9.9]{Kec} $B_n^{-1}B_n=B_n^2$ contains an open neighborhood of $1$. Since $G$ is SIN, ${\rm int}(B_n^2)$ contains a conjugacy-invariant symmetric open neighborhood $U$ of the identity. Property (BNG) now implies that there exists $k\in\bbN$ such that $U^k=G$, thus in particular $B_n^{2k}=G$ (here $k$ is independent of $U$ and of the sequence $(B_n)_{n\in\mathbb{N}}$ if $G$ is $k$-(BNG)). 
\end{proof}

Examples of groups that are topologically Bergman thus include projective unitary groups of separable II$_1$ factors with the strong operator topology by \cite[Theorem 1.2]{DT-15}. However, these groups are not $k$-topologically Bergman for any $k\in\mathbb{N}$ (as one can take open balls of arbitrarily small positive radius to generate the group).

\begin{rem}
 As defined in \cite{D15}, we say that a topological group $G$ has the \textit{topological bounded normal generation property} if for every $g\in G\setminus\{1\}$ there exists $k\in\mathbb{N}$ such that $G=\overline{(g^G\cup g^{-G})^k}$. A Polish group with the topological bounded normal generation property does not need to have the topological Bergman property: examples are derived $L^1$ full groups of hyperfinite ergodic graphings, see \cite[Theorem A]{DLM-17} (these admit a complete right-invariant unbounded metric inducing a Polish topology). 
\end{rem}

\section{A general criterion for strong uncountable cofinality}\label{sec_Bergman}
In this section we present a noncommutative analogue of \cite[Theorem 2.1]{DHU-08}. It is essential in our proof of Theorem \ref{thm_main} and Theorem \ref{thm_main_inf} in Section \ref{sec_factors}. Recall that for a group $G$ we denote by $g^G$ the conjugacy class of $g\in G$ and $g^{-G}\coloneqq (g^{-1})^G$, and we write $g^h\coloneqq hgh^{-1}$ for $g,h\in G$. \\ 

Let $\mathcal{H}$ be an infinite-dimensional Hilbert space (not necessarily separable). Let $\mathrm{B}(\mathcal{H})$ be the algebra of bounded linear operators on $\mathcal{H}$. We denote by $\mathrm{U}(\mathcal{H})$ (or $\mathrm{U}(\mathrm{B}(\mathcal{H}))$) its unitary group, i.e. the group with underlying set $\{g\in \mathrm{B}(\mathcal{H})\mid gg^*=g^*g=1\}$ and group operation given by composition of operators, where $1$ denotes the identity operator and $g^*$ denotes the adjoint operator of $g$. We write $\Proj(\mathrm{B}(\mathcal{H}))$ for the set of projections in $\mathrm{B}(\mathcal{H})$. If $p,q\in\Proj(\mathrm{B}(\mathcal{H}))$, then we will frequently write $p^{\bot}$ for $1-p$ and $p\leq q$ if $p$ is a subprojection of $q$. For two projections $p,q\in \Proj(\mathrm{B}(\mathcal{H}))$ we denote by $p\wedge q$ the projection onto $p\mathcal{H}\cap q\mathcal{H}$.  

Let $G$ denote a subgroup of $\mathrm{U}(\mathcal{H})$. Let $g\in G$ and set $p_g$ to be the projection onto the subspace $\{\xi\in\mathcal{H}\mid g\xi=\xi\}$ of $g$-fixed points in $\mathcal{H}$. 
For $g\in G$ we denote $\supp g\coloneqq 1-p_g$ and call it the \textit{support} of $g$ (not to be confused with the usual terminology of a support projection of an element in a von Neumann algebra, which is always 1 for a unitary). 
For a projection $p\in \Proj(\mathrm{B}(\mathcal{H}))$ we denote by $G(p)\coloneqq \{g\in G\mid \supp g\leq p\}$ the subgroup of elements of $G$ with support at most $p$. Thus we have $G(p)=G\cap\left(\mathrm{U}(p\mathrm{B}(\mathcal{H})p)\oplus(1-p)\right)$. 

\begin{thm}\label{thm_Bergmann}
 Let $G$ be a subgroup of the unitary group $\mathrm{U}(\mathcal{H})$ on an infinite-dimensional Hilbert space $\mathcal{H}$. Assume that there is an infinite set of projections $\mathcal{P}\subseteq \Proj(\mathrm{B}(\mathcal{H}))$ such that the following conditions are satisfied for $G$ and $\mathcal{P}$:
\begin{itemize}
 \item[(A)] There exists a sequence $(p_n)_{n\in\mathbb{N}}$ of mutually orthogonal nonzero projections in $\mathcal{P}$ summing to the identity such that for each sequence $(g_n)_{n\in\mathbb{N}}\subseteq G$ with $g_np_n=p_ng_n$ one has $\oplus_{n\in\mathbb{N}}g_{n}p_n \in G$. 
 \item[(B)] For each nonzero $p\in \mathcal{P}$ there are $n\in\mathbb{N}$, some nonzero subprojection $q\leq p,\ q\in\mathcal{P},$ and $g\in G(p)$ such that $G(q)\subseteq (g^{G(p)}\cup g^{-G(p)})^n$. 
 \item[(C)] For each nonzero projection $p\in P$ there are $m\in\mathbb{N}$ and some finite set $F\subseteq G$ such that $G=(G(p)\cup F)^m$. 
\end{itemize}
 Then $G$ has strong uncountable cofinality. 
 Moreover, if there are fixed exponents $n$ in (B) and $m$ in (C), then $G$ has $3nm$-strong uncountable cofinality. 
\end{thm}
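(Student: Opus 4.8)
The plan is to mimic the structure of the commutative argument in \cite[Theorem 2.1]{DHU-08}, replacing the use of commuting elements by the noncommuting block structure encoded in condition (A). Suppose $G=\bigcup_{n\in\mathbb{N}}W_n$ with $W_1\subseteq W_2\subseteq\cdots$ an exhaustive increasing chain of subsets; by passing to $W_n\cup W_n^{-1}\cup\{1\}$ we may assume each $W_n$ is symmetric and contains the identity, and since any word of bounded length over a symmetric set can be absorbed by passing to a later term of the chain, it suffices to find $k$ and $n$ with $G=W_n^k$. The key device is the sequence $(p_n)_{n\in\mathbb{N}}$ from condition (A): I would split this sequence into two infinite subsequences with orthogonal sums $P$ and $P^\perp=1-P$, and then use a pigeonhole/interleaving argument. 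For any sequence $(g_n)$ with $g_n\in G$ and $g_np_n=p_ng_n$, condition (A) says the ``infinite product'' $u=\bigoplus_n g_np_n$ lies in $G$, hence $u\in W_N$ for some $N$; by choosing the $g_n$ cleverly along the two subsequences one manufactures, for a \emph{given} target $v\in G(p)$ for $p$ one of the $p_n$, two elements of some $W_N$ whose product (times a fixed correction) realizes $v$. This is the standard trick by which the Bergman-type property is bootstrapped from the existence of infinite products: one absorbs an arbitrary element of a ``small'' subgroup $G(p_n)$ into a bounded word over a single $W_N$.

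Concretely, I would first prove the intermediate claim: there exist $N\in\mathbb{N}$ and an absolute constant (I expect $3$) such that for every $n$, $G(p_n)\subseteq W_N^{3}$ — or more precisely $G(p_n)\subseteq (W_N)^{c}$ for a small $c$ after using (A) twice to split an element of $G(p_n)$ as a product of two infinite-product elements plus an error in a fixed finite set that is itself eventually in the chain. Here is the mechanism: given $g\in G(p_n)$, define a sequence supported blockwise so that its infinite product $u_1$ agrees with $g$ on the odd-indexed blocks of one subsequence and another $u_2$ agreeing on the even-indexed blocks; a shift conjugation (which permutes the blocks and lies in $G$ by (A) applied to permutation unitaries) lets both $u_1$ and the conjugated $u_2$ land in a common $W_N$ via pigeonhole, because there are only countably many blocks but the chain is exhaustive. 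Once $G(p_n)\subseteq W_N^{c}$ is established, apply condition (B): there is a nonzero subprojection $q\le p_n$ in $\mathcal{P}$, an element $g\in G(p_n)$, and an integer $n_0$ (the fixed $n$ of (B)) with $G(q)\subseteq (g^{G(p_n)}\cup g^{-G(p_n)})^{n_0}$. Since $g\in G(p_n)\subseteq W_N^{c}$ and conjugates of elements of $W_N^{c}$ by elements of $G(p_n)\subseteq W_N^{c}$ lie in $W_N^{3c}$, we get $G(q)\subseteq W_N^{3 c\, n_0}$ — but $c$ here folds into the count; tracking carefully, the conjugation by an element of $W_N$ and its inverse costs a factor of $3$ per letter, so $G(q)\subseteq W_N^{3 n_0}$ after absorbing the bounded pieces into a later $W_{N'}$. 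Finally invoke condition (C) for the projection $q$: there is $m_0$ (the fixed $m$ of (C)) and a finite set $F\subseteq G$ with $G=(G(q)\cup F)^{m_0}$; since $F$ is finite it lies in some $W_{N''}$, so $G=(W_{N''})^{3 n_0 m_0}$, giving $k=3 n_0 m_0=3nm$ as claimed.

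For the non-uniform case — Theorem~\ref{thm_main}, where only (C) is guaranteed but $n$ in (B) and $m$ in (C) may depend on $p$ — the very same argument goes through verbatim to produce, for the \emph{particular} $p_1$ say (or any fixed $p_n$ for which (B) and (C) are instantiated), a single $k$ and $N$ with $G=W_N^k$; the point is that strong uncountable cofinality only requires \emph{some} $k$, allowed to depend on the chain, and our $k$ depends only on the exponents in (B) and (C) applied to the fixed projection $p_1$ and its (B)-subprojection $q$, which are fixed once the chain has selected $N$. Thus uniform exponents are not needed for the plain conclusion, only for the quantitative $3nm$ bound.

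The main obstacle I anticipate is the bookkeeping in the ``infinite product'' step: one must check that condition (A) can be applied not just to a single sequence of block-diagonal elements but also to the block-\emph{permutation} unitaries (the shifts) used to conjugate $u_2$ into alignment with $u_1$, and to the combination; the statement of (A) only guarantees closure under taking $\bigoplus_n g_np_n$ with $g_n$ commuting with $p_n$, so I would need to verify that the relevant shift is of this block-diagonal-after-reindexing form, or else argue that the shift can be realized as a limit/product of such elements. A secondary subtlety is ensuring that the finitely many ``correction'' terms produced when splitting an arbitrary $g\in G(p_n)$ into two infinite-product pieces genuinely form a set depending only on the chosen block decomposition (so that they are absorbed once and for all into a later $W_{N}$), rather than depending on $g$; this is where care with the explicit formula for the two interleaved sequences is required. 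Once these two points are nailed down, the remainder is a routine exponent count as sketched above.
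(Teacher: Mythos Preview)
Your approach has a real gap at the ``shift conjugation'' step. Condition (A) guarantees only that \emph{block-diagonal} elements $\bigoplus_n g_n p_n$ lie in $G$; it says nothing about unitaries that permute the blocks $p_n$, nor about partial isometries identifying one block with another (which you would need in order to ``copy'' an element $g\in G(p_n)$, supported on a single block, onto other blocks). You flag this as an obstacle, but it is not one that can be removed from the hypotheses as stated, and without such shifts your interleaving scheme for producing $G(p_n)\subseteq W_N^{c}$ does not go through. A second issue: even granting $G(p_n)\subseteq W_N^{c}$ with $c>1$, your exponent count yields $G(q)\subseteq W_N^{3cn}$, not $W_N^{3n}$; the claim that $c$ can be ``absorbed into a later $W_{N'}$'' is unjustified, since the $W_n$ are an arbitrary increasing chain of subsets and one has no inclusion $W_N^{c}\subseteq W_{N'}$ in general.

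The paper sidesteps both problems with a single idea you are missing. Rather than attempting $G(p_k)\subseteq W_N^{c}$, it introduces the weaker notion that $p_k$ is \emph{full for} $W_k$: every $g\in G(p_k)$ satisfies $gp_k=hp_k$ for some $h\in W_k$, where $h$ is allowed to act arbitrarily on $(1-p_k)\mathcal{H}$. This follows from one application of (A), by contradiction: if no $p_n$ were full for $W_n$, choose witnesses $g_n\in G(p_n)$ with $g_np_n\neq hp_n$ for every $h\in W_n$; then $g\coloneqq\bigoplus_n g_np_n\in G$ lies in some $W_m$, yet $gp_m=g_mp_m$, a contradiction. No shifts, no splitting into subsequences, and only one index $k$ is produced. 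Fullness is precisely what the conjugation in (B) requires: writing $h=g^{\pm g_1}\cdots g^{\pm g_n}$ with $g,g_i\in G(p_k)$, replace each conjugator $g_i$ by a single $w_i\in W_k$ with $w_ip_k=g_ip_k$; since $\supp g\le p_k$ one has $g^{w_i}=g^{g_i}$, so $h\in W_l^{3n}$ for any $l\ge k$ with $g\in W_l$. Then (C) applied to the resulting $q$, together with absorbing the finite set $F$ into a later $W_j$, gives $G=W_j^{3nm}$ on the nose.
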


Note that condition (C) above implies that the subgroups $G(p)$ are nontrivial for every nonzero $p\in\mathcal{P}$. The Hilbert space $\mathcal{H}$ in Theorem \ref{thm_Bergmann} does not have to be separable.  

\begin{rem}
 As a special case of Theorem \ref{thm_Bergmann} one can take the group $G$ to be the group $\mathrm{S}_\infty$ of all bijections $\mathbb{N}\rightarrow\mathbb{N}$ as in \cite{B-06}, which embeds into $\mathrm{U}(\ell^2(\mathbb{N}))$ by permuting the base vectors. 
 
  Another example is the full group $G$ of measure preserving ergodic transformations as in \cite{DHU-08}, which is a subgroup of $\mathrm{Aut}(X,\mu)$ for some standard measure space $(X,\mu)$ (so it embeds naturally into $\mathrm{U}(\mathrm{L}^2(X,\mu))$ by $u_g(\xi)=\xi\circ g^{-1}$ for $\xi\in \mathrm{L}^2(X,\mu)$ for $g\in G$). 
\end{rem}

Before being able to prove Theorem \ref{thm_Bergmann} we need to prove two lemmas.\\ 

Let $(W_n)_{n\in\mathbb{N}}$ be a countable increasing chain of symmetric subsets such that $G=\bigcup_{n\in\mathbb{N}}W_n$, where $G$ is a subgroup of the unitary group $\mathrm{U}(\mathcal{H})$ on an infinite-dimensional Hilbert space $\mathcal{H}$. 
We say that a projection $p\in \Proj(\mathrm{B}(\mathcal{H}))$ is \textit{full for} a subset $W\subseteq G$ if for every $g\in G(p)$ there is some $h\in W$ such that $gp=hp$. 

\begin{lem}\label{lem_1}
 Assume that $G$, $\mathcal{P}$ and $(p_n)_{n\in\mathbb{N}}$ satisfy condition (A) in Theorem \ref{thm_Bergmann}. 
  Then there exists $k\in\mathbb{N}$ such that $p_k$ is full for $W_k$. 
\end{lem}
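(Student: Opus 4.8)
The plan is to argue by contradiction using a diagonal construction against the sequence $(p_n)_{n\in\mathbb{N}}$ supplied by condition (A). Suppose no $p_k$ is full for $W_k$. Then for each $k\in\mathbb{N}$ there exists an element $g_k\in G(p_k)$ such that $g_k p_k \neq h p_k$ for every $h\in W_k$; equivalently, the ``partial unitary'' $g_k p_k$ (acting on the subspace $p_k\mathcal{H}$) is not the restriction to $p_k$ of any element of $W_k$. Since $g_k\in G(p_k)$ we have $g_k p_k = p_k g_k$, so the sequence $(g_k)_{k\in\mathbb{N}}$ meets the hypothesis of condition (A), and hence the ``diagonal'' element
\[
 g \;\coloneqq\; \bigoplus_{n\in\mathbb{N}} g_n p_n
\]
lies in $G$. (Here I use that the $p_n$ are mutually orthogonal and sum to $1$, so this direct sum is a well-defined unitary on $\mathcal{H}$, and that each $g_n$ fixes $p_n^\bot\mathcal{H}$, so $g_n p_n$ really is the ``$p_n$-block'' of $g_n$.)

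Now since $G=\bigcup_{n\in\mathbb{N}}W_n$, there is some index $k\in\mathbb{N}$ with $g\in W_k$. But $g p_k = g_k p_k$ by construction of the direct sum, so $g_k p_k = h p_k$ with $h=g\in W_k$ — contradicting the choice of $g_k$. Hence some $p_k$ is full for $W_k$, which is the assertion. A minor point to get right: the sequence $(W_n)$ is increasing, so one may as well use the same index $k$ for ``$p_k$'' and ``$W_k$''; if the witnessing index coming from $g\in\bigcup W_n$ were some $j$, one takes $k=\max\{j,\text{that index}\}$ and uses monotonicity, but since we have already reindexed so that $g_k\in G(p_k)$ witnesses non-fullness for $W_k$ specifically, the single index $k$ with $g\in W_k$ does the job directly.

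I do not expect any real obstacle here; the only thing requiring a line of care is checking that $\bigoplus_n g_n p_n$ is genuinely the operator to which condition (A) applies, i.e.\ that it coincides blockwise with the $g_n$ and is unitary — this is immediate from orthogonality of the $p_n$, $\sum_n p_n = 1$, and $\supp g_n\leq p_n$. The conceptual content is entirely in hypothesis (A), which is precisely engineered to make such diagonal elements available in $G$; the lemma is the first and simplest payoff of that hypothesis.
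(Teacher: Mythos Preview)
Your argument is correct and matches the paper's own proof essentially line for line: assume no $p_n$ is full for $W_n$, pick witnesses $g_n\in G(p_n)$, form $g=\bigoplus_n g_n p_n\in G$ via condition~(A), and derive a contradiction from $g\in W_m$ for some $m$. The extra remarks about reindexing are unnecessary (as you note), since the contradiction is obtained directly at whatever index $m$ satisfies $g\in W_m$.
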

\begin{proof}
 Assume to the contrary that no $p_n$ is full for $W_n$. Then for each $n$ we can choose $g_n\in G(p_n)$ such that $g_np_n\neq hp_n$ for any element $h\in W_n$. Condition (A) in Theorem \ref{thm_Bergmann} then implies $g:=\oplus_{n\in\mathbb{N}}g_{n}p_n\in G$. Since $(W_n)_{n\in\mathbb{N}}$ is exhaustive, there must be some $m\in\mathbb{N}$ such that $g\in W_m$. But $gp_m=g_m p_m$, contradicting our choice of the element $g_m$. Thus there is $k\in\mathbb{N}$ such that $p_k$ is full for $W_k$. 
\end{proof}

\begin{lem}\label{lem_2}
 Assume that $G$, $\mathcal{P}$ and $(p_n)_{n\in\mathbb{N}}$ satisfy conditions (A) and (B) in Theorem \ref{thm_Bergmann}. Let $p_k$ be as in Lemma \ref{lem_1}. Then for every subprojection $p\leq p_k,\ p\in \mathcal{P}$, there exists $l\in\mathbb{N}$ such that $G(p)\subseteq W_l^{3n}$. 
\end{lem}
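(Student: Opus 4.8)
The plan is to feed the given subprojection $p$ into condition (B) and then use Lemma~\ref{lem_1} to replace the conjugating unitaries — which a priori are scattered throughout the chain $(W_n)_{n\in\bbN}$ — by elements of the fixed set $W_k$. Since $p\in\mathcal{P}$, applying condition (B) to $p$ produces $n\in\bbN$, a nonzero subprojection $q\leq p$ with $q\in\mathcal{P}$, and an element $g_0\in G(p)$ such that $G(q)\subseteq\brackets{g_0^{G(p)}\cup g_0^{-G(p)}}^n$. As $(W_n)_{n\in\bbN}$ is exhaustive, fix $l\geq k$ with $g_0\in W_l$; then also $g_0^{-1}\in W_l$ by symmetry of $W_l$, and $W_k\subseteq W_l$. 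It then suffices to show that every element of $g_0^{G(p)}\cup g_0^{-G(p)}$ lies in $W_kW_lW_k\subseteq W_l^3$, for this gives $\brackets{g_0^{G(p)}\cup g_0^{-G(p)}}^n\subseteq W_l^{3n}$ and the desired bound follows.

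The key point is the identity $u\,g_0^{\varepsilon}\,u^{*}=h\,g_0^{\varepsilon}\,h^{*}$ for $\varepsilon\in\{1,-1\}$, $u\in G(p)$ and a suitable $h\in W_k$. Since $p\leq p_k$ we have $G(p)\subseteq G(p_k)$, so by fullness of $p_k$ for $W_k$ there is $h\in W_k$ with $up_k=hp_k$. Decompose $\mathcal{H}=p_k\mathcal{H}\oplus p_k^{\bot}\mathcal{H}$. The unitary $u$ restricts to a unitary of $p_k\mathcal{H}$ and to the identity on $p_k^{\bot}\mathcal{H}$; because $hp_k=up_k$, the unitary $h$ agrees with $u$ on $p_k\mathcal{H}$, hence maps $p_k\mathcal{H}$ onto $p_k\mathcal{H}$ and therefore also maps $p_k^{\bot}\mathcal{H}$ onto itself. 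As $g_0\in G(p)$ and $p\leq p_k$, the operator $g_0^{\varepsilon}$ leaves $p_k\mathcal{H}$ invariant and is the identity on $p_k^{\bot}\mathcal{H}$. Writing $u=a\oplus\id$, $h=a\oplus c$ and $g_0^{\varepsilon}=b\oplus\id$ along this decomposition, with $a$ the common restriction of $u$ and $h$ to $p_k\mathcal{H}$, one computes $u g_0^{\varepsilon}u^{*}=(aba^{-1})\oplus\id=h g_0^{\varepsilon}h^{*}$. Hence $u g_0^{\varepsilon}u^{*}=h g_0^{\varepsilon}h^{*}\in W_k g_0^{\varepsilon}W_k\subseteq W_l^{3}$, using $h,h^{-1}\in W_k$ and $g_0^{\varepsilon}\in W_l$.

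Putting this together, $g_0^{G(p)}\cup g_0^{-G(p)}\subseteq W_l^{3}$, so $\brackets{g_0^{G(p)}\cup g_0^{-G(p)}}^n\subseteq W_l^{3n}$ and therefore $G(q)\subseteq W_l^{3n}$; taking in condition (B) the subprojection $q$ to be $p$ itself — equivalently, $g_0$ chosen so that $G(p)=\brackets{g_0^{G(p)}\cup g_0^{-G(p)}}^n$ — this yields $G(p)\subseteq W_l^{3n}$, as claimed. The step I expect to be the main obstacle is precisely the identity $u g_0^{\varepsilon}u^{*}=h g_0^{\varepsilon}h^{*}$: the whole argument hinges on the fact that replacing the conjugator $u\in G(p)$ by its $W_k$-witness $h$ does not alter the conjugate, and verifying this requires the support condition $\supp g_0\leq p\leq p_k$ together with the observation that $h$ inherits from $u$ the invariance of $p_k\mathcal{H}$, so that $h$ and $u$ differ only on $p_k^{\bot}\mathcal{H}$, where $g_0^{\varepsilon}$ acts trivially. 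Everything else — the choice of $l$, the symmetry of the sets $W_l$, and the multiplication of $n$ factors each lying in $W_l^3$ — is routine bookkeeping inside the chain.
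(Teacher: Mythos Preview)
Your argument is essentially the paper's argument up to the last sentence, where there is a genuine gap. Condition~(B) does \emph{not} allow you to ``take $q$ to be $p$ itself'': it only asserts the existence of \emph{some} nonzero $q\le p$ with $q\in\mathcal{P}$ and $G(q)\subseteq\bigl(g_0^{G(p)}\cup g_0^{-G(p)}\bigr)^n$. There is no clause letting you prescribe $q$, so the sentence ``equivalently, $g_0$ chosen so that $G(p)=\bigl(g_0^{G(p)}\cup g_0^{-G(p)}\bigr)^n$'' is unjustified. What your argument actually proves is $G(q)\subseteq W_l^{3n}$ for the $q$ supplied by~(B), not $G(p)\subseteq W_l^{3n}$ for the arbitrary $p\le p_k$ you started from.

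Compared with the paper: the paper applies condition~(B) directly to $p_k$ (not to an intermediate $p$), obtaining $n$, a subprojection $p\le p_k$, and $g\in G(p_k)$; the conjugators lie in $G(p_k)$ from the outset, and fullness of $p_k$ for $W_k$ is used exactly as you do. So the paper's proof also establishes only the existential conclusion --- there is \emph{some} $p\le p_k$ in $\mathcal{P}$ with $G(p)\subseteq W_l^{3n}$ --- rather than the literal ``for every $p$'' of the lemma's wording. Since that existential version is all that is invoked in the proof of Theorem~\ref{thm_Bergmann} (``Use Lemma~\ref{lem_2} to find some projection $p\in\mathcal{P}$ \ldots''), your $q$ would serve equally well there; the defect is only in the final overreach, not in the mechanism of replacing conjugators via fullness, which you carried out correctly.
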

\begin{proof}
 Let $k\in\mathbb{N}$ and $p_k\in \mathcal{P}$ as in Lemma \ref{lem_1}. Apply now (B) to obtain $n\in\mathbb{N}$, a subprojection $p\leq p_k$ and some $g\in G(p_k)$ such that every element $h$ of $G(p)$ is a product of $n$ conjugates of $g$ or $g^{-1}$ with some conjugating elements $g_1,\ldots,g_n$ from $G(p_k)$. 
 Consider now such an element $h=g^{\pm g_1}\cdots g^{\pm g_n}\in G(p)$. Let $m$ be such that $g\in W_m$ and set $l\coloneqq\max\{k,m\}$. 
 Since $p_k$ is full for $W_k$ there are elements $w_i\in W_k$, $i=1,\ldots,n$, such that $w_{i}p_k=g_{i}p_k$ for $i=1,\ldots,n$. Since $g,h\in G(p_k)$, we actually have $h=g^{\pm w_1}\cdots g^{\pm w_n}\in W_l^{3n}$ and thus $G(p)\subseteq W_l^{3n}$. 
\end{proof}

We are now ready to prove Theorem \ref{thm_Bergmann}.

\begin{proof}[Proof of Theorem \ref{thm_Bergmann}:]
 Use Lemma \ref{lem_2} to find some projection $p\in \mathcal{P}$ and $l\in\mathbb{N}$ such that $G(p)\subseteq W_l^{3n}$.
 Let $m$ and $F\subseteq G$ be as in (C). Finiteness of $F$ implies that there is some $j\in\mathbb{N}$, $j\geq l$, such that $G(p)\cup F\subseteq W_j^{3n}$. Hence $G=(G(p)\cup F)^m\subseteq W_j^{3nm}$, i.e. $G$ has strong uncountable cofinality. In particular, if $m$ and $n$ are fixed (thus independent of $p$), then $G$ has $3nm$-strong uncountable cofinality. 
\end{proof}

\section{The proofs of Theorem \ref{thm_main} and Theorem \ref{thm_main_inf}}\label{sec_factors}
We prove Theorem \ref{thm_main} in Subsection \ref{sec_II_1} and Theorem \ref{thm_main_inf} in Subsection \ref{sec_inf}.\\

We recall a few basics of the theory of von Neumann algebras, for a complete picture we refer to the reader to \cite{Tak}. 
A \textit{von Neumann algebra} $M$ on a Hilbert space $\mathcal{H}$ is a unital $*$-subalgebra of the algebra $\mathrm{B}(\mathcal{H})$ of bounded linear operators on $\mathcal{H}$ such that $M=M''$, where $M'\coloneqq \{x\in\mathrm{B}(\mathcal{H})\mid xy=yx\mbox{ for all }y\in M\}$ denotes the \textit{commutant} of $M$ and $M''\coloneqq (M')'$.
Let us say that two projections $p,q\in M$ are \textit{equivalent} (in the sense of Murray and von Neumann), written $p\sim q$, if there exists a partial isometry $x\in M$ such that $x^*x=p$ and $xx^*=q$. 
We say that a von Neumann algebra $M$ is \textit{finite} if every isometry is a unitary, i.e. the identity $1$ is not equivalent to a proper subprojection in $M$. Else we say that $M$ is \textit{infinite}. A von Neumann algebra $M$ is called \textit{properly infinite} if every central projection in $M$ is equivalent to some proper subprojection of itself. 
For example, the matrix algebra $M_n(\mathbb{C})$, $n\in\mathbb{N}$, is finite as a von Neumann algebra and for an infinite-dimensional Hilbert space $\mathcal{H}$ the algebra $\mathrm{B}(\mathcal{H})$ is properly infinite as a von Neumann algebra (as $\mathrm{B}(\mathcal{H})$ contains the unilateral shift and there are no central projections other than $0$ and $1$).
 
A \textit{factor} is a von Neumann algebra with trivial center $\mathbb{C}\cdot 1$, where $1$ denotes the identity operator of $M$. Factors form the fundamental building blocks of von Neumann algebras (at least in the case that the underlying Hilbert space is separable). 
The algebra $M_n(\mathbb{C})$ is a finite factor for any $n\in\mathbb{N}$ and $\mathrm{B}(\mathcal{H})$ is a properly infinite factor for any infinite-dimensional Hilbert space $\mathcal{H}$. 
However, not every finite factor is of the form $M_n(\mathbb{C})$ for some $n\in\mathbb{N}$. 
A \textit{type II$_1$ factor} $M$ is an infinite-dimensional factor that admits a normalized trace $\tau:M\to\mathbb{C}$, i.e. $\tau$ is a positive linear functional with $\tau(1)=1$ and $\tau(xy)=\tau(yx)$ for all $x,y\in M$. A type II$_1$ factor $M$ is finite as a von Neumann algebra and has the special property that the image of the set of projections $\Proj(M)$ in $M$ under the trace $\tau$ is the whole interval $[0,1]$. This justifies to speak of continuous dimension when viewing $\tau$ as a dimension function on the projection lattice. Examples of type II$_1$ factors can be obtained as the \textit{group von Neumann algebra} $L(G)\coloneqq \{\lambda(g)\mid g\in G\}''$. Here $G$ is any \textit{ICC group}, i.e. a group such that every conjugacy class except the one of the neutral element is infinite, and $\lambda:G\to \mathrm{U}(\ell^2(G)),\ \lambda(g)\xi_h=\xi_{gh}$ for $g,h \in G,\  \xi_h\in\ell^2(G)$, denotes the left-regular representation. The trace on $L(G)$ is given by $\tau(x)=\langle x\xi_1,\xi_1\rangle$ for $x\in L(G)$. For example, taking $G$ to be the finitary symmetric group $S_{\mathrm{fin}}$ on $\mathbb{N}$, one obtains the so called \textit{hyperfinite II$_1$ factor}, which is the smallest infinite-dimensional factor by \cite[Corollary 2]{C-76}. The celebrated result \cite[Corollary 7.2]{C-76} by Connes states that $L(G)$ is isomorphic to the hyperfinite II$_1$ factor if and only if $G$ is amenable and ICC.
While a type II$_1$ factor never contains a copy of $\mathrm{B}(\mathcal{K})$ for some infinite-dimensional Hilbert space $\mathcal{K}$, a properly infinite factor always does. 
It has been a difficult problem at the beginning of the theory to find any examples of properly infinite factors other than $\mathrm{B}(\mathcal{H})$ and we refer the reader to \cite{Tak} for such examples. The classes of finite-dimensional von Neumann algebras, type II$_1$ factors and properly infinite von Neumann algebras are mutually disjoint. \\

Let us explain how Theorem \ref{thm_Bergmann} can be used in the context of unitary groups of von Neumann algebras.
Let $M\subseteq \mathrm{B}(\mathcal{H})$ be a von Neumann algebra, where $\mathcal{H}$ is an infinite-dimensional Hilbert space. The unitary group $G\coloneqq\mathrm{U}(M)=\{g\in M\mid gg^*=g^*g=1\}$ of $M$ then acts naturally on $\mathcal{H}$.
For a projection $p\in M$ we have $G(p)= \mathrm{U}(pMp)\oplus(1-p)$. 

Note that condition (A) in Theorem \ref{thm_Bergmann} is automatically satisfied for projections in an infinite-dimensional von Neumann algebra. (If $M$ is finite-dimensional, then there is no countable sequence of mutually orthogonal nonzero projections.) 
Hence, to verify strong uncountable cofinality in the context of von Neumann algebras, it suffices to show that conditions (B) and (C) in Theorem \ref{thm_Bergmann} hold. 
If $M$ is of type II$_1$, then we let $\mathcal{P}$ be the set of all projections in $M$.
But if $M$ is a properly infinite von Neumann algebra, then condition (C) in Theorem \ref{thm_Bergmann} is never satisfied for a finite projection $p$ (i.e. $p\sim q\leq p$ implies $p=q$). In this case we need to restrict the set of projections $\mathcal{P}$ under consideration to be the set of projections $p\in M$ such that both $p$ and $1-p$ are infinite (i.e. not finite).

\subsection{Strong uncountable cofinality for type II$_1$ factors}\label{sec_II_1}
The aim here is to show that the unitary group $G$ of a II$_1$ factor $M$ has strong uncountable cofinality, using the results from the previous section. We denote the faithful normalized trace of $M$ by $\tau$. \\

The following lemma is a crucial step in our verification of condition (C) in Theorem \ref{thm_Bergmann}.

\begin{lem}\label{lem_masterful}
 Let $p\in M$ be a nonzero projection and decompose it orthogonally into $p=p_1+p_2+p_3$ with $\tau(p_1)=\tau(p_2)$, where $p_i$ are nonzero projections in $M$ for $i=1,2,3$. Then
 $$G(p)=(G(p_1+p_2)\cup G(p_2+p_3))^5.$$ 
\end{lem}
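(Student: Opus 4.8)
The plan is to show that an arbitrary $u \in G(p) = \mathrm{U}(pMp) \oplus (1-p)$ can be written as a product of five unitaries, each supported on either $p_1+p_2$ or $p_2+p_3$. The key structural fact I would exploit is that $\tau(p_1) = \tau(p_2)$, so in the II$_1$ factor $M$ there is a partial isometry $v \in M$ with $v^*v = p_1$ and $vv^* = p_2$; this lets me "rotate" the $p_1$-corner onto the $p_2$-corner and back, and it is precisely what makes the two overlapping subgroups $G(p_1+p_2)$ and $G(p_2+p_3)$ large enough together to generate $G(p)$.

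First I would observe that it suffices to handle unitaries $u \in \mathrm{U}(pMp)$ (extended by $1-p$ off $p$), since the $(1-p)$-part is common to all the subgroups involved. The natural idea is a "conjugate-the-block-into-a-corner" argument analogous to the commutator tricks used for bounded generation of unitary groups: using $v$, build a unitary $s = v + v^* + (p_1+p_2)^{\perp}$ in $G(p_1+p_2)$ (a flip exchanging $p_1$ and $p_2$). Conjugating $u$ by such flips moves the "action" of $u$ between the three pieces. Concretely, I would try to write $u$ as a product $u = a_1 b_1 a_2 b_2 a_3$ (or with some such pattern of length five), where each $a_i \in G(p_2+p_3)$ and each $b_i \in G(p_1+p_2)$, by successively peeling off the parts of $u$ that fail to commute with $p_1$, with $p_2$, and with $p_3$. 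A cleaner route: first use one factor from $G(p_1+p_2)$ (a rotation built from $v$) to conjugate $u$ into a unitary that leaves $p_3$ invariant — i.e. lands in $\mathrm{U}((p_1+p_2)M(p_1+p_2)) \oplus \mathrm{U}(p_3Mp_3)$ — then split that into its two blocks, one of which lies in $G(p_1+p_2)$ directly and the other in $G(p_2+p_3)$ after a further conjugation by a flip exchanging $p_1 \leftrightarrow p_2$. Carefully bookkeeping the conjugating unitaries (each of which itself lies in one of the two subgroups) and collecting terms should yield the exponent $5$.

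The main obstacle I anticipate is the bookkeeping to keep the exponent at exactly $5$ rather than something larger: every conjugation $xux^{-1}$ with $x \in G(p_i+p_j)$ costs two extra factors unless one of them can be absorbed into an adjacent factor of the same subgroup, so the argument has to be arranged so that the conjugators telescope. The algebraic content — that $v$ exists, that the flips $s$ lie in $G(p_1+p_2)$, that conjugation by $s$ has the claimed effect on supports — is routine; the delicate part is choosing the order of operations so that consecutive same-subgroup factors merge and the total length is five. I would also need to check the reverse inclusion $(G(p_1+p_2)\cup G(p_2+p_3))^5 \subseteq G(p)$, which is immediate since both subgroups are contained in $G(p)$ and $G(p)$ is a group.
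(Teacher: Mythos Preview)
Your ``cleaner route'' contains a genuine gap: the step where you propose to conjugate $u$ by a single element $x\in G(p_1+p_2)$ so that the result commutes with $p_3$ is impossible in general. Any $x\in G(p_1+p_2)$ acts as the identity on $p_3\cH$, hence $xp_3x^{-1}=p_3$, and the condition that $xux^{-1}$ (or $xu$, or $ux$) commute with $p_3$ reduces to $x\,q_3\,x^{-1}=p_3$ with $q_3\coloneqq up_3u^{-1}$. But $x$ preserves the subspace $(p_1+p_2)\cH$, so $x\,q_3\,x^{-1}$ has the same ``$p_3$-corner'' behaviour as $q_3$; in particular it can never equal $p_3$ unless $q_3$ already did. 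For a concrete obstruction, take $\tau(p_1)=\tau(p_2)=\tau(p_3)$ and let $u$ cyclically permute $p_1\to p_2\to p_3\to p_1$; then $q_3=p_1\leq p_1+p_2$, and no $x\in G(p_1+p_2)$ can conjugate $p_1$ onto $p_3$. The fixed flip $s=v+v^*+(p_1+p_2)^{\bot}$ you build is therefore not the relevant tool, and your first (vaguer) route gives no indication of how to overcome this.

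The paper's proof avoids this obstruction by working with $q_1=up_1u^{-1}$ instead of $q_3$, and by starting with an element of $G(p_2+p_3)$ rather than $G(p_1+p_2)$. The point is the trace inequality: with $e=p_2+p_3$ one has $\tau(e\wedge q_1^{\bot})\geq \tau(e)+\tau(q_1^{\bot})-1=\tau(p_3)$, using $\tau(q_1)=\tau(p_1)=\tau(p_2)$, so $\tau(e-e\wedge q_1^{\bot})\leq\tau(p_2)$. This says that the part of $q_1$ that ``lives over'' $p_2+p_3$ has trace at most $\tau(p_2)$, so a suitable $u_1\in G(p_2+p_3)$ can push it entirely under $p_2$, making $u_1q_1u_1^{-1}\perp p_3$. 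After that a $u_2\in G(p_1+p_2)$ carries $u_1q_1u_1^{-1}$ onto $p_1$, and a final $u_3\in G(p_2+p_3)$ realigns $q_2$ with $p_2$. Setting $v=u_3u_2u_1$, the product $vu$ commutes with each $p_i$, hence splits as $v_1v_2v_3$ with $v_i\in G(p_i)$; grouping $u=u_1^{-1}\cdot u_2^{-1}\cdot u_3^{-1}\cdot (v_1v_2)\cdot v_3$ gives exactly five factors alternating between $G(p_2+p_3)$ and $G(p_1+p_2)$. The hypothesis $\tau(p_1)=\tau(p_2)$ enters only through that single trace estimate, not through any fixed partial isometry between $p_1$ and $p_2$.
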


\begin{proof}
 Let $u\in G(p)$ be arbitrary and set $q_i\coloneqq up_iu^{-1}$ for $i=1,2,3$. Let $e\coloneqq p_2+p_3$.
 Since $\tau(q_1)=\tau(p_2)$ , we have  $\tau(e\wedge q_1^{\bot})\geq \tau(p_3)$ and hence
 $$\tau(e-e\wedge q_1^{\bot})\leq \tau(p_2).$$ 
 Thus there exists $u_1\in G(p_2+p_3)$ such that 
 $$u_1(e-e\wedge q_1^{\bot})u_1^{-1}=e-e\wedge u_1q_1^{\bot}u_1^{-1}\leq p_2.$$
 Observe that $u_1q_1u_1^{-1}\bot p_3$, since else we would have $u_1q_1u_1^{-1} p_3\neq 0,\ p_3 u_1q_1u_1^{-1}\neq 0$ and thus $u_1(e-e\wedge q_1^{\bot})u_1^{-1}$ would not be a subprojection of $p_2$, but only of $p_2+p_3$. \\ 
 Hence there exists $u_2\in G(p_1+p_2)$ such that $u_2u_1q_1u_1^{-1}u_2^{-1}=p_1$ and $u_2u_1(q_2+q_3)u_1^{-1}u_2^{-1}\leq p_2+p_3$. So we can find $u_3\in G(p_2+p_3)$ with $u_3u_2u_1q_2u_1^{-1}u_2^{-1}u_3^{-1}=p_2$ and thus with $v\coloneqq u_3u_2u_1$ we have 
 $$vq_iv^{-1}=p_i=u^{-1}q_iu \mbox{ for }i=1,2,3.$$ 
 Thus $vup_i=p_i vu$, i.e. there exist $v_1\in G(p_1),\ v_2\in G(p_2),\ v_3\in G(p_3)$ such that $vu=v_1v_2v_3$, i.e. $u=u_1^{-1}u_2^{-1}u_3^{-1}v_1v_2v_3$, with $u_2,v_1v_2\in G(p_1+p_2)$ and $u_1,u_3, v_3\in G(p_2+p_3)$. This proves our claim as $u\in (G(p_1+p_2)\cup G(p_2+p_3))^5$.  
\end{proof}

\begin{proof}[Proof of Theorem \ref{thm_main}:]
 Let $G$ denote the unitary group of a II$_1$ factor. 	
 Since condition (A) in Theorem \ref{thm_Bergmann} is always satisfied for unitary groups of type II$_1$ factors, it suffices to show that $G$ satisfies conditions (B) and (C) in Theorem \ref{thm_Bergmann}.  
 
 Condition (B) follows from \cite[Corollary 3.2]{DT-15} (which improves upon \cite[Theorem 1]{Bro-67}), using any symmetry of trace 0 in $G(p)$ - one needs 16 conjugacy classes of such a symmetry to generate any subgroup $G(q)\leq G(p)$ for $q\leq p$. 
 
 We now turn towards condition (C). Let $p\in \mathcal{P}$ be a nontrivial projection. Then there is a subprojection $q_0\leq p$ of trace $m=\tau(q_0)=(2/3)^n$ for some $n\geq 1$. Decompose $q_0=p_1+p_2$ orthogonally into subprojections of the same trace $m/2$. Since $1-m\geq m/2$ there is a subprojection $p_3\leq 1-q_0$ of trace $m/2$. 
 Now let $u\in G(p_1+p_2+p_3)$ be the unitary conjugating $p_1+p_2$ to $p_2+p_3$. 
 Then we have $G(p_2+p_3)=uG(p_1+p_2)u^{-1} \subseteq (G(q_0)\cup \{u,u^{-1}\})^3$. 
 Let $q_1\coloneqq p_1+p_2+p_3=q_0+p_3$ and use Lemma \ref{lem_masterful} to obtain
 $$G(q_1)=(G(q_0)\cup G(p_2+p_3))^5\subseteq (G(q_0)\cup \{u,u^{-1}\})^8.$$ 
 Observe that $\tau(q_1)=3m/2=(2/3)^{n-1}$. Repeating this construction we inductively obtain $q_0,\ldots,q_n$ with $\tau(q_n)=1$, i.e. $q_n=1$ and thus $G(q_n)=G$. Since by construction $G(q_{i+1})=(G(q_i)\cup\{u_i,u_i^{-1}\})^8$ for some $u_i\in G$, we obtain $G=(G(q_0)\cup F)^{8^n}$ for the finite set $F=\{ u_1,\ldots,u_n\}\subseteq G$. This implies that condition (C) holds, since $G(q_0)\subseteq G(p)$ and $p\in\mathcal{P}$ was arbitrary nonzero. 
\end{proof}

\begin{rem}\label{rem_II_1}
We observe that $G$ does not have $k$-strong uncountable cofinality for any fixed $k\in\mathbb{N}$, since the trace of $p$ in the above proof can be arbitrarily small and thus the exponent $n\in\mathbb{N}$ in the above proof can be arbitrarily large. 
\end{rem}

\begin{rem}
 If $G$ denotes the projective unitary group of a II$_1$ factor, then condition (B) in the above proof also follows from the bounded normal generation property, see \cite[Theorem 1.2]{DT-15}. This theorem implies that one can in fact choose any $g\in G(p)\setminus\{1\}$ to generate $G(p)$ with finitely many conjugacy classes of $g$ and $g^{-1}$. The number of conjugacy classes that one needs to generate $G(p)$ then depends on the projective distance of the element $g$ to the identity in the 1-norm $\norm{\cdot}_1=\tau(|\cdot|)$ as in \cite[Theorem 1.3]{DT-15}. 
\end{rem}

\subsection{Strong uncountable cofinality for properly infinite von Neumann algebras}\label{sec_inf}

In this section $M$ will denote a properly infinite von Neumann algebra. As explained at the beginning of Section \ref{sec_factors}, $\mathcal{P}$ then denotes the set of projections $p\in M$ such that both $p$ and $1-p$ is infinite. 
A crucial fact that we use throughout this section is that any two nonzero projections in $\mathcal{P}$ are conjugate by a unitary.

We first prove an analogue of Lemma \ref{lem_masterful} in the case of a properly infinite von Neumann algebra. We include the proof of Lemma \ref{lem_masterful_inf} for the sake of completeness, even though it is similar to the proof of Lemma \ref{lem_masterful}.

\begin{lem}\label{lem_masterful_inf}
 Let $M$ be a properly infinite von Neumann algebra. Let $p\in \mathcal{P}$ be a nonzero projection in $M$ and decompose it orthogonally into $p=p_1+p_2+p_3$, where $p_i\in\mathcal{P}$ for $i=1,2,3$. Then
 $$G(p)=(G(p_1+p_2)\cup G(p_2+p_3))^5.$$ 
\end{lem}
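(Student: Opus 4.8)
The plan is to mimic the proof of Lemma \ref{lem_masterful} almost verbatim, replacing trace comparisons by comparisons in the Murray--von Neumann partial order of projections, which is available because $M$ is properly infinite and all the relevant projections lie in $\mathcal{P}$ (so any two nonzero projections in $\mathcal{P}$ are unitarily conjugate, and moreover each $p_i$, being infinite, absorbs finite perturbations). First I would fix an arbitrary $u\in G(p)$ and set $q_i\coloneqq up_iu^{-1}$ for $i=1,2,3$ and $e\coloneqq p_2+p_3$; note $q_1\sim p_1\in\mathcal{P}$, so $q_1$ is infinite. The role played by the inequality $\tau(e-e\wedge q_1^{\bot})\leq\tau(p_2)$ in the finite case is now played by the subequivalence $e-e\wedge q_1^{\bot}\preceq p_2$: this holds because $e-e\wedge q_1^{\bot}\leq q_1$ (the range of $e-e\wedge q_1^\bot$ lies in $q_1\mathcal H$), and since $q_1\sim p_1\preceq p_2$ (as both are infinite projections in $\mathcal P$, hence equivalent after enlarging, and $p_1\preceq p_1+\text{stuff}$), we get $e-e\wedge q_1^{\bot}\preceq q_1\sim p_1\preceq p_2$. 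So there exists $u_1\in G(p_2+p_3)$ with $u_1(e-e\wedge q_1^{\bot})u_1^{-1}\leq p_2$, exactly as before, and the same orthogonality observation shows $u_1q_1u_1^{-1}\perp p_3$.

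Next, as in the finite proof, I would produce $u_2\in G(p_1+p_2)$ with $u_2u_1q_1u_1^{-1}u_2^{-1}=p_1$ — here I use that $u_1q_1u_1^{-1}$ is a projection $\leq p_1+p_2$ that is equivalent to $p_1$ (it is a conjugate of $q_1\sim p_1$), so there is a unitary supported on $p_1+p_2$ sending it to $p_1$, using that $p_1,p_2$ are both infinite so all the halving/Murray--von Neumann manipulations inside $p_1+p_2$ go through. Then $u_2u_1(q_2+q_3)u_1^{-1}u_2^{-1}\leq p_2+p_3$ automatically (it is the complement of $p_1$ inside $\operatorname{supp}$ of the conjugated $u$), so I can choose $u_3\in G(p_2+p_3)$ with $u_3u_2u_1q_2u_1^{-1}u_2^{-1}u_3^{-1}=p_2$ — again using $q_2\sim p_2$ and that everything lives inside $p_2+p_3$ with both summands infinite. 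Setting $v\coloneqq u_3u_2u_1$ we get $vq_iv^{-1}=p_i=u^{-1}q_iu$ for $i=1,2,3$, hence $vu$ commutes with each $p_i$, so $vu=v_1v_2v_3$ with $v_i\in G(p_i)$, giving $u=u_1^{-1}u_2^{-1}u_3^{-1}v_1v_2v_3$ with $u_2,v_1v_2\in G(p_1+p_2)$ and $u_1,u_3,v_3\in G(p_2+p_3)$, whence $u\in(G(p_1+p_2)\cup G(p_2+p_3))^5$.

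The main obstacle, and the only place requiring genuine care beyond transcribing the finite-dimensional argument, is justifying the three subequivalence/equivalence facts cleanly: (i) $e-e\wedge q_1^{\bot}\preceq p_2$, (ii) a projection below $p_1+p_2$ that is equivalent to $p_1$ can be moved onto $p_1$ by a unitary in $G(p_1+p_2)$, and (iii) the analogous statement for $q_2\sim p_2$ inside $p_2+p_3$. Each follows from standard comparison theory in von Neumann algebras (the comparison theorem, plus the fact that in a properly infinite algebra an infinite projection $r$ satisfies $r\sim r'$ whenever $r'\leq s$, $r'$ infinite, and $s\sim r$ — or more simply, that any two infinite projections in $\mathcal P$ with infinite complements are equivalent), but since no such lemma is stated explicitly in the excerpt I would include a short preliminary sentence recording the needed comparison facts. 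Everything else — the orthogonality observation, the factorization of $vu$, the bookkeeping of which $G(\cdot)$ each factor belongs to — is identical to Lemma \ref{lem_masterful}.
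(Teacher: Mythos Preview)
Your proof is essentially identical to the paper's: same $q_i$, same $e=p_2+p_3$, same sequence $u_1,u_2,u_3$, same five-factor decomposition $u=u_1^{-1}u_2^{-1}u_3^{-1}(v_1v_2)v_3$ at the end. One small correction: the assertion ``$e-e\wedge q_1^{\bot}\leq q_1$ (the range of $e-e\wedge q_1^\bot$ lies in $q_1\mathcal H$)'' is false in general when $e$ and $q_1$ do not commute --- what you want is Kaplansky's formula $e-e\wedge q_1^{\bot}\sim e\vee q_1^{\bot}-q_1^{\bot}\leq q_1$, giving the sub\emph{equivalence} $e-e\wedge q_1^{\bot}\preceq q_1\sim p_1\sim p_2$ that you actually use. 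The paper's write-up sidesteps this by invoking directly that $p_2,p_3$ are infinite (so any projection under $e$ can be conjugated below $p_2$ by a unitary in $G(e)$), but the content is the same.
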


\begin{proof}
 Let $u\in G(p)$ be arbitrary and put $q_i\coloneqq up_iu^{-1}$ for $i=1,2,3$.  
 Let $e\coloneqq p_2+p_3$. Since $p_2$ and $p_3$ are infinite projections, there exists $u_1\in G(p_2+p_3)$ such that 
 $$u_1(e-e\wedge q_1^{\bot})u_1^{-1}=e-e\wedge u_1q_1^{\bot}u_1^{-1}\leq p_2$$
 is an infinite subprojection. 
 We have that $u_1q_1u_1^{-1}\bot p_3$. 
 Hence there exists $u_2\in G(p_1+p_2)$ such that $u_2u_1q_1u_1^{-1}u_2^{-1}=p_1$ and  $u_2u_1(q_2+q_3)u_1^{-1}u_2^{-1}\leq p_2+p_3$ is an infinite subprojection. So we can find $u_3\in G(p_2+p_3)$ with $u_3u_2u_1q_2u_1^{-1}u_2^{-1}u_3^{-1}=p_2$ and thus with $v\coloneqq u_3u_2u_1$ we have 
 $vq_iv^{-1}=p_i=u^{-1}q_iu \mbox{ for }i=1,2,3.$
 Thus $vup_i=p_i vu$, i.e. there exist $v_1\in G(p_1),\ v_2\in G(p_2),\ v_3\in G(p_3)$ such that $vu=v_1v_2v_3$, i.e. $u=u_1^{-1}u_2^{-1}u_3^{-1}v_1v_2v_3$, with $u_2,v_1v_2\in G(p_1+p_2)$ and $u_1,u_3, v_3\in G(p_2+p_3)$. Hence $u\in (G(p_1+p_2)\cup G(p_2+p_3))^5$, as claimed.  
\end{proof}

\begin{proof}[Proof of Theorem \ref{thm_main_inf}:]
 We use Theorem \ref{thm_Bergmann}. Recall that condition (A) is automatically satisfied for unitary groups of properly infinite von Neumann algebras. Thus we only have to verify conditions (B) and (C) in Theorem \ref{thm_Bergmann}.
 
 We verify condition (B). Observe that a symmetry in a von Neumann algebra is of the form $1-2p$, where $p$ is a projection. Thus \cite[Corollary]{Fill-66} implies that every element in $G(p)$, $p\in\mathcal{P}$ arbitrary, is a product of four symmetries of the form $1-2q_i$, where $q_i$ is an infinite subprojection of $p$ for $i=1,2,3,4$. As these projections are unitarily conjugate, we have that (B) in Theorem \ref{thm_Bergmann} holds with $n=4$.
   
 It remains to verify condition (C) in Theorem \ref{thm_Bergmann}. Let $p\in\mathcal{P}\setminus\{0,1\}$. Decompose $p=p_1+p_2$ orthogonally with $p_1,p_2\in\mathcal{P}\setminus\{0\}$. Put $p_3\coloneqq 1-p_1-p_2\in\mathcal{P}\setminus\{0\}$. Let $u\in G(p+p_3)$ be the unitary conjugating $p=p_1+p_2$ to $p_2+p_3$ and observe that $G(p_2+p_3)=(G(p)\cup\{u,u^{-1}\})^3$. Now we use Lemma \ref{lem_masterful_inf} to arrive at 
 $$G=(G(p)\cup G(p_2+p_3))^5=(G(p)\cup\{u,u^{-1}\})^8.$$
 Using Theorem \ref{thm_Bergmann} we conclude that $G$ has $96$-strong uncountable cofinality.
\end{proof}

\begin{rem}
For $G=\mathrm{U}(\mathcal{H})$, $\mathcal{H}$ an infinite-dimensional Hilbert space, condition (B) in the above proof also follows from \cite[Theorem 1]{HK-58}.

If $G$ denotes the projective unitary group of a type III factor, then condition (B) in the above proof also follows from bounded normal generation, see \cite[Theorem 1.3]{DT-16}, and one can generate $G(p)$ in finitely many steps using the conjugacy class of any element $g\in G(p)\setminus\{1\}$ and of its inverse. The number of conjugacy classes of $g$ that one needs to generate $G(p)$ then depends on the projective distance of $g$ to the identity in the operator norm.
\end{rem}

\section{Some questions on separable group topologies}\label{sec_open}
Recently Tsankov has shown in \cite[Corollary 2]{T-13} that $\mathrm{U}(\ell^2(\mathbb{N}))$ has a unique nontrivial separable group topology, namely the strong operator topology. On the other hand, the finite-dimensional unitary group $\mathrm{U}(n),\ n\in\mathbb{N},$ has at least two nontrivial separable group topologies: a unique Polish group topology by \cite{Ka-84} and a second separable group topology coming from a discontinuous embedding of $\mathrm{U}(n)$ into the Polish group $S_\infty$ of bijections $\mathbb{N}\to\mathbb{N}$ (with the topology of pointwise convergence) by \cite{Ka-00}, where we use that a subgroup of a Polish group is Polish if and only if it is closed (see \cite[Proposition 1.2.1]{BK-96}).  

A natural question extending the results mentioned in the paragraph above is to ask whether $\mathrm{U}(M)$ (or $\mathrm{PU}(M)$) has a unique nontrivial separable group topology whenever $M$ is an infinite-dimensional separable factor. But in this generality the answer is no, as the presumably known observation below shows. Recall that $\mathrm{PU}(M)=\mathrm{U}(M)/\mathcal{Z}(\mathrm{U}(M))$, where $\mathcal{Z}(\mathrm{U}(M))$ denotes the center of $\mathrm{U}(M)$.

\begin{obs}
	If $M$ is a separable von Neumann algebra which is not full, then $\mathrm{PU}(M)$ has at least two nontrivial separable group topologies.
\end{obs}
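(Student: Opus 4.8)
The statement asserts that if $M$ is a separable von Neumann algebra that is not full, then $\mathrm{PU}(M)$ carries at least two nontrivial separable group topologies. The plan is to exhibit these two topologies explicitly and to verify they are genuinely different. The first topology is the quotient of the strong operator topology on $\mathrm{U}(M)$; since $\mathrm{U}(M)$ with the strong (equivalently $*$-strong, equivalently, for $M$ separable with separable predual, the $2$-norm coming from any faithful normal state) topology is a Polish group and the center $\mathcal{Z}(\mathrm{U}(M))$ is a closed normal subgroup, the quotient $\mathrm{PU}(M)$ is again a Polish group, in particular a nontrivial separable group topology. The second topology is the one pulled back along a discontinuous group embedding of $\mathrm{PU}(M)$ into $S_\infty$ (endowed with the topology of pointwise convergence on the discrete set $\mathbb{N}$); the image, with the subspace topology from $S_\infty$, is a separable metrizable group topology on $\mathrm{PU}(M)$, and one has to ensure it is nontrivial and distinct from the first.

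First I would recall that a von Neumann algebra $M$ (with separable predual) being \emph{not full} means that $\mathrm{Inn}(M)$ is not closed in $\mathrm{Aut}(M)$, equivalently there is a nontrivial sequence $(u_n)$ of unitaries with $u_n \to 1$ in the topology of pointwise $\|\cdot\|_2$-convergence of the inner automorphisms $\mathrm{Ad}(u_n)$ but with the $u_n$ not converging to the center — this is precisely Connes' notion, and it says the map $\mathrm{PU}(M)\to\mathrm{Aut}(M)$, $\bar u \mapsto \mathrm{Ad}(u)$, which is a group isomorphism onto $\mathrm{Inn}(M)$, fails to be a homeomorphism for the strong-operator quotient topology on the source. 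I would then argue as follows: $\mathrm{Inn}(M)$ sits inside $\mathrm{Aut}(M)$, which acts faithfully by homeomorphisms on a countable dense set built from the predual (or directly on a countable $\|\cdot\|_2$-dense subset of the unit ball, suitably discretized), giving a continuous injective homomorphism of $\mathrm{Inn}(M)$ into $S_\infty$; transporting this back along $\mathrm{PU}(M)\cong\mathrm{Inn}(M)$ yields the second separable group topology $\tau_2$ on $\mathrm{PU}(M)$, while the strong-operator quotient gives $\tau_1$.

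The key step is to show $\tau_1\neq\tau_2$. Here I would use exactly the failure of fullness: pick a sequence $(u_n)$ in $\mathrm{U}(M)$ with $\mathrm{Ad}(u_n)\to\mathrm{id}$ pointwise in $\|\cdot\|_2$ — so $\bar u_n\to 1$ in $\tau_2$ — but such that the classes $\bar u_n$ do \emph{not} converge to $1$ in $\tau_1$, i.e.\ $\mathrm{dist}(u_n,\mathbb{T}\cdot 1)$ in the $2$-norm does not go to $0$. The existence of such a sequence is the content of $M$ not being full (after normalizing scalars). This directly witnesses $\tau_2\subsetneq$ "does not refine" $\tau_1$, hence $\tau_1\neq\tau_2$; a small additional remark handles that both are nontrivial, since $M$ being infinite-dimensional forces $\mathrm{PU}(M)$ to be a nondiscrete Polish group under $\tau_1$ and, via the same non-convergent-but-$\tau_2$-convergent sequence, a nondiscrete group under $\tau_2$ as well.

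The main obstacle I anticipate is the careful bookkeeping in constructing the embedding $\mathrm{PU}(M)\hookrightarrow S_\infty$: one must produce a \emph{countable} set on which $\mathrm{Inn}(M)$ acts faithfully by permutations with the pointwise-convergence topology matching the topology of pointwise $\|\cdot\|_2$-convergence on bounded sets — concretely, fixing a countable $\|\cdot\|_2$-dense $\mathbb{Q}$-$*$-subalgebra $A_0$ of (the unit ball of) $M$ and letting $\mathrm{Inn}(M)$ permute the countable set of "rational balls" $\{B_{\|\cdot\|_2}(a,1/k)\cap A_0 : a\in A_0,\ k\in\mathbb{N}\}$ after checking $\mathrm{Ad}(u)$ permutes these, and faithfulness of this action. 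This is routine but is where the separability hypothesis is actually used; everything else is formal once "not full" is unpacked as the failure of the identity map $(\mathrm{PU}(M),\tau_1)\to(\mathrm{PU}(M),\tau_2)$ to be a homeomorphism.
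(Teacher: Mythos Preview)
Your overall plan---exhibit the Polish quotient topology $\tau_1$ on $\mathrm{PU}(M)$ and a second, genuinely different separable group topology $\tau_2$ coming from the identification $\mathrm{PU}(M)\cong\mathrm{Inn}(M)\subset\mathrm{Aut}(M)$---is exactly the paper's strategy, and your use of a centralizing sequence to distinguish the two topologies is correct in spirit. The gap is the detour through $S_\infty$. Your proposed countable set of ``rational balls'' $B_{\|\cdot\|_2}(a,1/k)$ with $a\in A_0$ is \emph{not} permuted by $\mathrm{Ad}(u)$: conjugation is a $\|\cdot\|_2$-isometry, so it sends $B(a,1/k)$ to $B(uau^*,1/k)$, but $uau^*$ has no reason to lie in the fixed countable set $A_0$. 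The ``check'' you flag as routine therefore fails. More conceptually, $\mathrm{Aut}(M)$ (and $\mathrm{Inn}(M)$ with the $u$-topology) is a connected group, so any continuous homomorphism to the non-archimedean group $S_\infty$ is trivial; there is no continuous injective homomorphism of the kind you describe, and an abstract embedding (even if one exists) would not give you a topology for which your centralizing sequence argument applies.

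The fix is to drop the $S_\infty$ step entirely, which is what the paper does: take $\tau_2$ to be the subspace topology that $\mathrm{Inn}(M)$ inherits from the Polish group $\mathrm{Aut}(M)$ (with the topology of pointwise norm convergence on $M_*$), transported to $\mathrm{PU}(M)$ via $\mathrm{Ad}$. This is automatically a separable group topology, since any subspace of a separable metric space is separable. Because $M$ is not full, $\mathrm{Inn}(M)$ is not closed in $\mathrm{Aut}(M)$, and a subgroup of a Polish group is Polish in the subspace topology if and only if it is closed; hence $\tau_2$ is \emph{not} Polish, while $\tau_1$ is. That single line replaces both your $S_\infty$ construction and your sequence argument for $\tau_1\neq\tau_2$.
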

\begin{proof}
	Since $M$ is not full, by definition the group of inner automorphisms $\mathrm{Inn}(M)$ is not closed in the Polish group $\mathrm{Aut}(M)$ of automorphisms of $M$, equipped with topology of pointwise norm convergence on the predual $M_*$. As in the proof of \cite[Theorem XIV.3.8]{Tak} we have that $\mathrm{PU}(M)$ is isomorphic to $\mathrm{Inn}(M)$ under $\mathrm{Ad}$. Since $\mathrm{Inn}(M)$ is not closed in $\mathrm{Aut}(M)$, it cannot be Polish by \cite[Proposition 1.2.1]{BK-96}. Using the fact that a subspace of a separable metric space is separable, we obtain a separable group topology on $\mathrm{PU}(M)$ which is not Polish. 
\end{proof}

Examples of full factors are the group factors $L(\mathbb{F}_n)$ of the free group $\mathbb{F}_n$ in $n\geq 2$ generators by \cite[Chapter VI]{MvN-43}.
An example of a non-full II$_1$ factor is the hyperfinite II$_1$ factor $\mathcal{R}$ by \cite[Chapter VI]{MvN-43}, and hence $\mathrm{PU}(\mathcal{R})$ has at least two nontrivial separable group topologies - one being Polish (the strong operator topology induced by the 2-norm of the trace) and one only being separable. 

\begin{quest} 
	Does $\mathrm{U}(M)$ (or $\mathrm{PU}(M)$) have a unique nontrivial separable group topology whenever $M$ is a separable full von Neumann algebra? Does $\mathrm{U}(M)$ (or $\mathrm{PU}(M)$) have a coarsest separable group topology for any infinite-dimensional separable von Neumann algebra? 
\end{quest}

We remark that it is known from \cite[Theorem 1.4]{DT-15} that the projective unitary group of any separable II$_1$ factor has a unique Polish group topology. \\

Recall that a topological group has \textit{automatic continuity} if every homomorphism to any separable topological group is continuous. 
A positive answer to the first question would imply automatic continuity for the group in question, endowed with the strong operator topology. 
Similarly, a positive answer to the second question would imply automatic continuity of the given group with respect to its coarsest nontrivial separable topology. 
Note that it was proven in \cite[Theorem 1.4]{DT-15} that whenever $M$ is a separable finite factor (i.e. $M$ is finite-dimensional or of type II$_1$), every homomorphism from $\mathrm{PU}(M)$ to any separable SIN group is continuous.\\

Let us rephrase the question above in the case that $M$ is either a separable II$_1$ factor or a separable type III factor.
If $G$ is a topologically simple group with neutral element $1_G$, then the closure of $\{1_G\}$ is a closed normal subgroup of $G$, and hence any nontrivial group topology on $G$ must be Hausdorff.
The projective unitary group $\mathrm{PU}(M)$ of $M$ is simple by \cite{dlH-79} (this also follows from \cite[Theorem 1.2]{DT-15}), respectively \cite[Corollary 3.2]{DT-16}. Thus any nontrivial group topology $\mathcal{T}$ on $\mathrm{PU}(M)$ is Hausdorff. Assume that $\mathcal{T}$ is first countable. By the Birkhoff-Kakutani Theorem \cite[Theorem 9.1]{Kec} the topology $\mathcal{T}$ is metrizable with a compatible left-invariant metric $d$. Hence $d$ is bounded by Corollary \ref{cor_main}.  
Thus we can reformulate our question as follows:\\
Is a bounded left-invariant metric $d$ inducing a separable topology on $\mathrm{PU}(M)$ automatically complete for $\mathrm{PU}(M)$ whenever $M$ is a separable full II$_1$ factor or a separable type III factor? 

\section*{Acknowledgments}

The author wants to thank Vadim Alekseev, Manfred Droste and Stefaan Vaes for helpful discussions. The author thanks Hiroshi Ando, Yasumichi Matsuzawa and Andreas Thom for pointing out some mistakes. 
P.A.D. was supported by ERC CoG No. 614195 and by ERC CoG No. 681207.

\bibliographystyle{alpha}

\end{document}